\newtheorem{theorem}{Theorem}[section]
\newtheorem{corollary}{Corollary}[theorem]
\newtheorem{prop}[theorem]{Proposition}
\newtheorem{lemma}[theorem]{Lemma}
\theoremstyle{definition}
\newtheorem{definition}[theorem]{Definition}
\theoremstyle{remark}
\newcommand{\R}{{\mathbb R}}
\newcommand{\F}{{\mathcal{F}}}
\newcommand{\sS}{{\mathscr{S}}}
\subjclass[2020]{42B10, 47B07, 47G10}
\title[Compactness of Fourier Concentration operators]{Compactness of Fourier concentration operators}
\author{Helge J. Samuelsen}
\address{Department of Mathematical Sciences,
         Norwegian University of Science and Technology,
         Trondheim, Norway}
\email{helge.j.samuelsen@ntnu.no}
\date{\today}
\begin{document}

\begin{abstract}
We present a sufficient condition on sets $E$ and $F$ in $\R^d$ to ensure compactness of Fourier concentration operators by introducing the notion of sets which are very thin at infinity. We are able to show that if the sets $E$ and $F$ are both very thin at infinity, then the associated Fourier concentration operator is compact on $L^2(\R^d)$. The proof relies on a combination of the Logvinenko-Sereda uncertainty principle together with an uncertainty principle due to Shubin, Vakilian and Wolff. This provides a partial answer to a question posed by Katsnelson and Machluf on truncated Fourier operators.
\end{abstract}
\maketitle

\section{Introduction}\label{sec:Intro}
Let $E$ and $F$ be subsets of $\R^d$. We consider the projection $P_E$ and Fourier projection $Q_F$ onto the sets $E$ and $F$ defined by
\[
P_Ef=\chi_E f, \qquad \widehat{Q_Ff}=\chi_F\widehat{f},
\]
for functions $f$ in the Schwartz class $\sS(\R^d)$. Here the Fourier transform is normalised by
\[
\mathcal{F}(f)(\xi)=\widehat{f}(\xi):=\int_{\R^d}e^{-2\pi i \xi\cdot x}f(x)\,dx.
\]
Composing these projections gives a bounded operator on $L^2(\R^d)$ defined by
\[
Q_FP_E f(x):=\F^{-1}(\chi_F\F(\chi_Ef))(x)=\iint_{\R^{2d}}e^{-2\pi i\xi\cdot(y-x)}\chi_F(\xi)\chi_E(y)f(y)\,dy\,d\xi,
\]
for $f\in \sS(\R^d)$. We call the operator $Q_FP_E$ a Fourier concentration operator, and Plancherel's theorem tells us that this operator is bounded by one in the operator norm for any pair of sets $E,F\subseteq \R^d$. 

The operator $Q_FP_E$ dates back to a series of Bell lab papers from the 1960's by Landau, Slepian and Pollak \cite{Pollak1,Pollak2,Pollak3,Slepian4}. The eigenvalues of the self-adjoint Fourier concentration operator
\[
(Q_FP_E)^*Q_FP_E=P_EQ_FP_E,
\]
were investigated by many authors, we refer the reader to the recent results in \cite{Kulikov, Romero} and the references there in. Common for these papers is that the subsets $E$ and $F$ have finite Lebesgue measure. If the sets $E$ and $F$ have finite Lebesgue measure, then the operator $Q_FP_E$ can be written as an integral operator 
\[
Q_FP_Ef(x)=\int_{\R^d}K(x,y)f(y)\,dy,
\]
with integral kernel $K$ given by
\[
K(x,y)=\int_{\R^{d}}e^{-2\pi i\xi\cdot(y-x)}\chi_F(\xi)\chi_E(y)\,d\xi=\widehat{\chi}_F(y-x)\chi_E(y)\in L^2(\R^{2d}).
\]
Since the integral kernel is in $L^2(\R^d)$, it is a Hilbert-Schmidt operator and thus a compact operator on $L^2(\R^d)$ \cite{Folland_Uncertainty}. The reverse statement is also true. If the Fourier concentration operator is in the Hilbert-Schmidt class, then the sets $E$ and $F$ have finite measure. For more general sets, the question of compactness is not well understood \cite{Katsnelson}. The main goal of this paper is to provide a sufficient condition on the sets $E$ and $F$ to ensure compactness of the Fourier concentration operator. This gives a partial answer to a question posed by Katsnelson and Machluf \cite{Katsnelson} on the compactness of the truncated Fourier operator $\mathcal{F}_E=\F\circ Q_EP_E$.

We define the radial function $\rho:\R^d\to (0,1]$ by
\[
\rho(x)=\min\left\{1,\frac{1}{|x|}\right\}.
\]
The radial function $\rho$ was used by Shubin, Vakilian and Wolff in \cite{Wolff_Uncertainty} to derive an uncertainty result for so-called $\varepsilon$-thin sets. A set $E$ is called $\varepsilon$\textit{-thin} if $|E\cap B(x,\rho(x))|< \varepsilon |B(x,\rho(x))|$ for some $\varepsilon>0$ and all $x\in\R^{d}$. Using a clever decomposition of the identity operator, they proved the following uncertainty principle.
 \begin{theorem}[\cite{Wolff_Uncertainty}, Thm $2.1$]\label{thm:Wolff_Unc}
There are $\varepsilon>0$ and $C<\infty$ such that if $E,F\subseteq \R^d$ are $\varepsilon$-thin sets, then for any $f\in L^2(\R^d)$
\[
\|f\|_{L^2}\leq C\left(\|f\|_{L^2(E^C)}+\|\widehat{f}\|_{L^2(F^{C})}\right).
\] 
\end{theorem}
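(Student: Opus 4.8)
The plan is to follow the scheme of Shubin, Vakilian and Wolff. I would first recast the inequality in operator form. Writing $P_{E^c}=\mathrm{Id}-P_E$, $Q_{F^c}=\mathrm{Id}-Q_F$ and using $f=Q_{F^c}f+P_{E^c}Q_Ff+P_EQ_Ff$, one finds $\norm{(\mathrm{Id}-P_EQ_F)f}_{L^2}\le\norm f_{L^2(E^c)}+2\norm{\widehat f}_{L^2(F^c)}$; hence, if one can show $\norm{P_EQ_F}_{L^2\to L^2}=1-\eta<1$, then $\mathrm{Id}-P_EQ_F$ is invertible with inverse of norm at most $\eta^{-1}$ and the theorem follows with $C=2\eta^{-1}$. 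Conversely, applying the asserted inequality to $g=Q_Ff$ (for which $\norm{\widehat g}_{L^2(F^c)}=0$) forces $\norm{P_EQ_F}^2\le1-C^{-2}$, so the two statements are in fact equivalent. Since $\norm{P_EQ_F}^2=\norm{P_EQ_FP_E}$ and $\langle P_EQ_FP_Ef,f\rangle=\norm{\widehat{\chi_Ef}}_{L^2(F)}^2$, it suffices to prove that, for $\varepsilon$ small enough, every $g\in L^2(\R^d)$ supported in an $\varepsilon$-thin set satisfies $\norm{\widehat g}_{L^2(F^c)}^2\ge\eta\norm g_{L^2}^2$ whenever $F$ is $\varepsilon$-thin.

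Two ingredients feed into this. First, a covering argument: if $F$ is $\varepsilon$-thin, covering $B(y,1)$ by finitely overlapping balls $B(x_i,\rho(x_i))$ (all of radius $\le1$) gives $\lvert F\cap B(y,1)\rvert\le C_d\varepsilon\lvert B(y,1)\rvert$ for all $y$; that is, $F^c$ is relatively dense at the unit scale with density $\to1$ as $\varepsilon\to0$. Combined with a quantitative Logvinenko--Sereda estimate -- equivalently, the reverse H\"older inequality on unit cubes for band-limited functions -- this yields a function $\theta(\varepsilon)\to0$ as $\varepsilon\to0$ such that every $h\in L^2(\R^d)$ with $\widehat h$ supported in a fixed ball $B(0,r_0)$ obeys $\norm h_{L^2(F)}\le\theta(\varepsilon)\norm h_{L^2}$. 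Second, the decomposition of the identity from \cite{Wolff_Uncertainty}: a smooth partition $\sum_Q\varphi_Q^2\equiv1$ subordinate to a Whitney decomposition of $\R^d$ into cubes $Q$ of side length $\ell(Q)\simeq\rho(x_Q)\le1$, with bounded overlap and $\norm{\partial^\alpha\varphi_Q}_\infty\lesssim\ell(Q)^{-\lvert\alpha\rvert}$, so that $\norm g_{L^2}^2=\sum_Q\norm{\varphi_Qg}_{L^2}^2$.

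The crucial local step comes next. For each $Q$ the piece $g_Q:=\varphi_Qg$ is supported in a cube of side $\lesssim1$, so $\widehat{g_Q}$ is the Fourier transform of a compactly supported function of bounded diameter; after an inessential modulation, which leaves $\lvert\widehat{g_Q}\rvert$ unchanged, $\widehat{g_Q}$ is band-limited to a fixed ball and the first ingredient gives $\norm{\widehat{g_Q}}_{L^2(F)}\le\theta(\varepsilon)\norm{g_Q}_{L^2}$. Since $\norm{\widehat g}_{L^2(F^c)}^2=\norm g_{L^2}^2-\norm{\widehat g}_{L^2(F)}^2$ and $\widehat g=\sum_Q\widehat{g_Q}$, the reduced statement would follow once $\norm{\widehat g}_{L^2(F)}^2=\big\lVert\sum_Q\widehat{g_Q}\big\rVert_{L^2(F)}^2$ is absorbed into $\theta(\varepsilon)^2\norm g_{L^2}^2$ plus a small remainder.

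The main obstacle is precisely this summation. The pieces $g_Q$ are almost orthogonal in $L^2(\R^d)$ because the cutoffs $\varphi_Q$ overlap boundedly in physical space, but their Fourier transforms overlap completely in frequency, and the interaction terms $\langle\widehat{g_Q},\widehat{g_{Q'}}\rangle_{L^2(F)}$ carry no apparent decay in $\lvert x_Q-x_{Q'}\rvert$ because $\chi_F$ is a rough multiplier -- a term-by-term estimate simply diverges. The way around this, which is the technical core of \cite{Wolff_Uncertainty} and the part I would import rather than reconstruct, is to read the sought bound on $\norm{\widehat g}_{L^2(F)}$ back as an operator-norm bound on $Q_FP_E$ and to run a Cotlar--Stein / $T^*T$ almost-orthogonality argument adapted to the metric generated by $\rho$, using the local gain $\theta(\varepsilon)$ on each block to defeat the bounded number of interacting blocks, with the quasi-self-similarity of $\rho$ keeping all constants uniform across scales. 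This yields $\norm{P_EQ_F}_{L^2\to L^2}<1$ for $\varepsilon$ below a dimensional threshold, which by the first paragraph proves the theorem.
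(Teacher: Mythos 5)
Note first that the paper does not prove this theorem; it is quoted directly from Shubin, Vakilian and Wolff. What the paper does reproduce, in Section 2.1, is the key machinery behind their argument, so a comparison is still possible.

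Your reduction to $\|P_EQ_F\|_{L^2\to L^2}<1$ and its equivalence with the quantitative inequality is correct and clean, as is the further reduction to the estimate $\|\widehat{g}\|^2_{L^2(F)}\le(1-\eta)\|g\|^2$ for $g$ supported in $E$. The covering argument that upgrades $\varepsilon$-thinness at scale $\rho$ to thinness at unit scale, and the local Logvinenko--Sereda bound applied to each compactly supported piece $g_Q$ (noting that $\widehat{g_Q}$ is band-limited after a modulation), are also sound. The difficulty is the one you yourself flag: summing the pieces $\widehat{g_Q}$ on $F$. But the fix you propose to import --- a Cotlar--Stein almost-orthogonality argument in the metric generated by $\rho$ --- is not what the reference does, and as you observe it has no obvious traction: the $T_Q^*T_{Q'}$ interactions carry no decay in $|x_Q-x_{Q'}|$ because $\chi_F$ is a rough frequency multiplier, so the standard almost-orthogonality machinery stalls exactly where you stop. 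The actual mechanism (reproduced in Section 2.1 of the paper) is a decomposition of the identity $\mathrm{Id}=S+T$ with $Sf=\sum_{j}\psi_j(\varphi_j*f)$: on each dyadic annulus one \emph{averages} $f$ at the matching scale $\simeq\rho$, rather than merely truncating it. The point is that this engineered split makes one factor automatically small on $E$ and the other automatically small on $F$, by a Schur test rather than by almost-orthogonality: the physical-side kernel $A(x,y)$ of $S$ satisfies the gain $\int_E|A(x,y)|\,dy\le C\varepsilon$ and the frequency-side kernel $B(\xi,\eta)$ of $T$ satisfies the dual gain $\int_F|B(\xi,\eta)|\,d\xi\le C\varepsilon$ (items (iii) and (vi) of Lemma \ref{lem:SchurTest}). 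These give $\|S(\chi_E f)\|\le C\sqrt{\varepsilon}\|f\|$ and $\|\chi_F\widehat{Tf}\|\le C\sqrt{\varepsilon}\|f\|$ directly; writing $\widehat{f}=\widehat{S(\chi_{E^c}f)}+\widehat{S(\chi_{E}f)}+\chi_F\widehat{Tf}+\chi_{F^c}\widehat{Tf}$ and absorbing the $C\sqrt{\varepsilon}$ terms for small $\varepsilon$ finishes the proof, with no summation over spatial blocks ever confronted through $\chi_F$. That averaging-versus-truncating device, together with the Schur-test $\varepsilon$-gain, is the genuine idea your sketch is missing; invoking Cotlar--Stein does not supply it.
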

An extension of this result can be found in \cite{Kovrizhkin1}. Theorem \ref{thm:Wolff_Unc} tells us that the only function on $L^2(\R^d)$ which is localised on $\varepsilon$-thin sets both in physical and Fourier space is the trivial function $f=0$.

We will slightly modify the definition of $\varepsilon$-thin sets in order to have a more quantitive control over the measure at infinity. This gives the following definition.
\begin{definition}[Very thin at infinity]
We say that $E\subseteq \R^d$ is \textit{very thin at infinity} if 
\begin{equation}\label{eq:defVTAI}
\lim_{|x|\to\infty}\frac{\left|E\cap B\big(x,\rho(x)\big)\right|}{\left|B\big(x,\rho(x)\big)\right|}=0.
\end{equation}
\end{definition}
The main result is that very thin at infinity is a sufficient condition on the sets $E$ and $F$ for the associated Fourier concentration operator to be a compact operator on $L^2(\R^d)$.
\begin{theorem}\label{thm:MainThm}
Let $E$ and $F$ be subsets of $\R^d$. If both the subsets $E$ and $F$ are very thin at infinity, then the operator $Q_FP_E$ is a compact operator on $L^2(\R^d)$.
\end{theorem}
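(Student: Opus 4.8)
The plan is to realise $Q_FP_E$ as an operator–norm limit of compact operators. For $R>0$ write $E=(E\cap B(0,R))\sqcup(E\setminus B(0,R))$, and similarly for $F$; since $P_{E\cap B(0,R)}+P_{E\setminus B(0,R)}=P_E$ and $Q_{F\cap B(0,R)}+Q_{F\setminus B(0,R)}=Q_F$ one obtains
\[
Q_FP_E \;=\; \underbrace{Q_FP_{E\cap B(0,R)}+Q_{F\cap B(0,R)}P_{E\setminus B(0,R)}}_{=:\,K_R}\;+\;\underbrace{Q_{F\setminus B(0,R)}P_{E\setminus B(0,R)}}_{=:\,S_R}.
\]
I would first show that $K_R$ is compact for every fixed $R$, and then that $\|S_R\|_{L^2\to L^2}\to 0$ as $R\to\infty$; together these give $\|Q_FP_E-K_R\|\to0$, hence compactness of $Q_FP_E$.

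For the compactness of $K_R$ the strategy is to check that each summand sends weakly null sequences to norm null sequences. The mechanism is that a single truncation already produces a localisation: for $g\in L^2(\R^d)$ the function $\chi_{E\cap B(0,R)}g$ is supported in the finite–measure set $B(0,R)$, so that $\widehat{\chi_{E\cap B(0,R)}g}$ is bounded, continuous and band–limited to $B(0,R)$ (being the Fourier transform of a compactly supported function), while $Q_{F\cap B(0,R)}g$ is itself band–limited to $B(0,R)$. A band–limited function restricted to a set of small density has $L^2$-mass bounded by a power of that density — a quantitative Logvinenko–Sereda (equivalently, Bernstein plus Cauchy–Schwarz) estimate — and on the \emph{bounded} window $F\cap B(0,\Lambda)$ one uses dominated convergence together with the pointwise decay coming from weak convergence. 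Splitting $F=(F\cap B(0,\Lambda))\cup(F\setminus B(0,\Lambda))$, letting first $n\to\infty$ and then $\Lambda\to\infty$ so that $F\setminus B(0,\Lambda)$ is as thin as desired (this is where the hypothesis on $F$ enters, even for the first summand; symmetrically the hypothesis on $E$ enters for the second), one concludes that $K_R$ annihilates weak limits, hence is compact.

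It remains to estimate $\|S_R\|=\|Q_{F\setminus B(0,R)}P_{E\setminus B(0,R)}\|$, and this is where the hypothesis of the theorem does the real work. Let $\delta(R)$ be the largest of the densities $|E\cap B(x,\rho(x))|/|B(x,\rho(x))|$ and $|F\cap B(x,\rho(x))|/|B(x,\rho(x))|$ over $|x|\ge R-1$; then $E\setminus B(0,R)$ and $F\setminus B(0,R)$ are $\delta(R)$–thin in the sense of Shubin–Vakilian–Wolff, and $\delta(R)\to 0$ precisely because $E$ and $F$ are very thin at infinity. Thus everything reduces to the quantitative statement that $\|Q_{F'}P_{E'}\|\to 0$ as $\delta\to 0$, uniformly over $\delta$–thin pairs $E',F'$ — equivalently, that the Shubin–Vakilian–Wolff annihilation constant for $\delta$–thin sets tends to $1$ as $\delta\to0$. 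I expect this to be the main obstacle: Theorem~\ref{thm:Wolff_Unc} as stated only supplies a fixed $C<\infty$, hence the bound $\|Q_{F'}P_{E'}\|\le\sqrt{1-C^{-2}}<1$, which does not decay and cannot by itself be iterated to $0$. To extract decay one has to re-enter the phase–space (wave–packet) decomposition of the identity underlying Theorem~\ref{thm:Wolff_Unc} and insert the Logvinenko–Sereda inequality cell by cell, so that the density $\delta$ appears quantitatively in the estimate; a subsidiary but essential point is the geometric passage, via a Vitali–type covering, from control of $E'$ and $F'$ in balls of the intrinsic radius $\rho(x)$ to control in balls of a fixed radius, which is the form in which Logvinenko–Sereda is applied and in which the complement of a very thin set is seen to be (barely, but uniformly) thick. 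Granting this, $\|Q_FP_E-K_R\|=\|S_R\|\le\omega(\delta(R))$ with $\omega(0^+)=0$, and letting $R\to\infty$ finishes the proof.
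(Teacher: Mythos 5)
Your overall plan---decompose $Q_FP_E$ into a ``good'' part plus the tail $S_R=Q_{F_\infty^R}P_{E_\infty^R}$, show the good part compact and $\|S_R\|\to 0$, then invoke closedness of the compact ideal---is exactly the plan of the paper (compare the decomposition \eqref{eq:OpDecomp}), and the two analytic inputs you identify, Logvinenko--Sereda and the Shubin--Vakilian--Wolff wave-packet decomposition, are exactly the inputs the paper uses. Two points of comparison are worth flagging.

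First, you fold the two cross-terms $Q_{F_\infty^R}P_{E^R}$ and $Q_{F^R}P_{E_\infty^R}$ into $K_R$ and propose to prove $K_R$ compact for each fixed $R$ by a weak-to-strong-continuity argument, which itself requires a Logvinenko--Sereda estimate on a secondary truncation $\Lambda\to\infty$. The paper keeps the four-term decomposition, uses only $Q_{F^R}P_{E^R}$ as the compact piece (Hilbert--Schmidt, both sets bounded), and shows each cross term has small \emph{operator norm} as $R\to\infty$: this is Corollary~\ref{cor:LogSerUnc}, after the rescaling of Corollary~\ref{cor:BndChangeVar} and the covering Lemma~\ref{lem:BallEstimate}, together with the adjoint identity $\|Q_{F_\infty^R}P_{E^R}\| = \|Q_{E^R}P_{F_\infty^R}\|$. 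This is shorter and avoids the weak-convergence bookkeeping, while invoking the same covering and LS machinery you would also need; your version is valid but does more work than necessary for the same conclusion.

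Second, you identify as the ``main obstacle'' extracting quantitative decay in $\varepsilon$ from Theorem~\ref{thm:Wolff_Unc}, and propose to insert Logvinenko--Sereda cell-by-cell into the wave-packet decomposition. That is not needed, and is not what the paper does: the Shubin--Vakilian--Wolff argument already carries the decay. Lemma~\ref{lem:SchurTest}(iii) and (vi) bound $\int_E|A(x,y)|\,dy$ and $\int_F|B(\xi,\eta)|\,d\xi$ by $C\varepsilon$, so Schur's test directly gives $\|S(\chi_E\cdot)\|_{L^2\to L^2}\le C\sqrt\varepsilon$ and $\|\chi_F\widehat{T\cdot}\|_{L^2\to L^2}\le C\sqrt\varepsilon$ as in \eqref{eq:epsilon_est}, whence $\|Q_{F_\infty^R}P_{E_\infty^R}\|\le C\sqrt\varepsilon$ (Proposition~\ref{prop:WolffOpEst}). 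No Logvinenko--Sereda enters the diagonal estimate at all; it is reserved for the cross terms, and the ``intrinsic-radius to fixed-radius'' covering you correctly anticipate (Lemma~\ref{lem:BallEstimate}) also belongs there, not in the treatment of $S_R$.
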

Any bounded set $E$ is necessarily very thin at infinity as the intersection $E\cap B(x,\rho(x))$ is empty as $|x|$ tends to infinity. This means that Theorem \ref{thm:MainThm} recovers the compactness of the Fourier concentration operator for bounded subsets. However, sets of finite Lebesgue measure do not need to be very thin at infinity. Let $\{x_n\}_{n\in\mathbb{N}}\subseteq \R^d$ be any sequence of points such that $|x_n|=n^2$, and consider the set
\[
E=\bigcup_{n\in\mathbb{N}}B\left(x_n,\rho(x_n)\right).
\]
The set $E$ has finite Lebesgue measure since
\[
|E|=\sum_{n\in\mathbb{N}}|B\left(x_n,\rho(x_n))\right|=\sum_{n=\mathbb{N}}\frac{|B(0,1)|}{n^{2d}}<\infty,
\]
and consequently the operator $Q_EP_E$ is a Hilbert-Schmidt operator on $L^2(\R^d)$. Nevertheless, the set $E$ does not satisfy \eqref{eq:defVTAI} since for each $n\in\mathbb{N}$ we have
\[
\frac{\left|E\cap B(x_n,\rho(x_n))\right|}{\left|B(x_n,\rho(x_n))\right|}=1.
\]
We can therefore conclude that Theorem \ref{thm:MainThm} provides a sufficient, but not necessary condition for the compactness of Fourier concentration operators.

There are examples of sets with infinite Lebesgue measure that are very thin at infinity. For simplicity we restrict our discussion to $\R$. Let $\{x_{n}\}_{n\in\mathbb{N}}\subseteq \R$ be a sequence with $|x_{n}|=n+1$. Define the set
\[
E=\bigcup_{n\in\mathbb{N}}B\left(x_{n},\eta(x_{n})\right),
\]
where $\eta:\R\to (0,1]$ is the function given by
\[
\eta(x)=\min\left\{1,\frac{1}{|x|\log|x|}\right\}.
\]
The set $E$ has infinite Lebesgue measure since
\[
|E|= \sum_{n=2}^\infty\frac{|B(0,1)|}{n\log n}=\infty,
\]
while for any $R>1$ we have
\[
\sup_{|x|>R}\frac{|E\cap B(x,\rho(x))|}{|B(x,\rho(x))|}\leq \frac{1}{\log R}\xrightarrow{R\to \infty}{0},
\]
as $\eta(x)\leq \rho(x)$ for all $x\in \R$. This shows that $E$ is very thin at infinity, and that Theorem \ref{thm:MainThm} provides new sets for which the Fourier concentration operator is compact on $L^2(\R)$.


Compactness of the Fourier concentration operator is determined by the structure of $E$ and $F$ at infinity, as any bounded sets give rise to a Hilbert-Schmidt operator. We therefore decompose $E$ and $F$ as
\[
E=E^R\cup E_\infty^R,\qquad F=F^R\cup F_\infty^R
\] 
using the notation
\[
A^R=A\cap B(0,R),\qquad A_\infty^R=A\backslash A_R.
\]
The Fourier concentration operator can then be written as
\begin{equation}\label{eq:OpDecomp}
Q_FP_E=Q_{F^R}P_{E^R}+Q_{F_\infty^R}P_{E^R}+Q_{F^R}P_{E_\infty^R}+Q_{F_\infty^R}P_{E_\infty^R}.
\end{equation}
The operator $Q_{F^R}P_{E^R}$ is a compact operator as $F^R$ and $E^R$ are sets of finite Lebesgue measure. Since the set of compact operators is a closed operator ideal in the space of bounded linear operators on $L^2(\R^d)$, it suffices to prove that the three remaining operators are small in operator norm as $R$ tends to infinity. This is done through two different uncertainty principles. The remaining part of this section is devoted to the explanation on how the uncertainty principles are used in the proof of Theorem \ref{thm:MainThm}. A formal proof of Theorem \ref{thm:MainThm} will be presented towards the end of the paper.

There are two different uncertainty principles present in the proof of Theorem \ref{thm:MainThm}. The first is that of Shubin, Vakilian and Wolff, Theorem \ref{thm:Wolff_Unc}. More specifically we exploit the techniques used in the proof of Theorem \ref{thm:Wolff_Unc}. Since the  sets $E$ and $F$ can be reduced to $\varepsilon$-thin sets by removing a sufficiently large ball centred at the origin, one can show for each $\varepsilon>0$ that 
\[
\|Q_{F_\infty^R}P_{E_\infty^R}\|_{L^2\to L^2}<\frac{\varepsilon}{3},
\]
for all $R>0$ sufficiently large. The estimate relies on the decomposition of the identity as introduced in \cite{Wolff_Uncertainty}.

The operator norm of the cross-term $Q_{F_\infty^R}P_{E^R}$ can be bounded by 
\[
\|Q_{F^R}P_{E_\infty^R}\|_{L^2\to L^2}\leq \|Q_{B(0,R)}P_{E_\infty^R}\|_{L^2\to L^2},
\]
as $F^R\subseteq B(0,R)$. This is a consequence of the monotonicity of the Lebesgue integral. Since the measure of $E_\infty^R\cap B(x,1)$ tends to zero as $R$ tends to infinity, it is possible to conclude that $\|Q_{F^R}P_{E_\infty^R}\|_{L^2\to L^2}<\varepsilon/3$ by an application of the Logvinenko-Sereda uncertainty principle, see \cite[Prop $10.6$]{Schlag}. For a more general version of the Logvinenko-Sereda uncertainty principle, we refer to \cite{Kovrizhkin2}. The other cross-term operator $Q_{F_\infty^R}P_{E^R}$ can also be bounded on $L^2(\R^d)$ in the same fashion by using
\[
\overline{\mathcal{F}(Q_FP_Ef)}=P_FQ_{E}\overline{\mathcal{F}(f)},\qquad (Q_FP_E)^*f=P_EQ_Ff,
\]
to conclude that the order in which the projection operators are composed does not influence the operator norm.

\section{Uncertainty Principles and proof of Theorem \ref{thm:MainThm}}

\subsection{Decomposition of the identity}
We begin by presenting a decomposition of the identity operator on $L^2(\R^d)$ used by Shubin, Vakilian and Wolff to prove Theorem \ref{thm:Wolff_Unc}. The construction of the decomposition can be found in \cite{Wolff_Uncertainty,Kovrizhkin1}, but for completeness we also include it here.

Let $\varphi\in \sS(\R^d)$ be a radial function with $0\leq\widehat{\varphi}\leq 1$, $\mathrm{supp }(\widehat{\varphi})\subseteq B(0,2)$ and $\widehat{\varphi}\equiv 1$ on $B(0,1)$. For each $j\in\mathbb{N}$, define $\varphi_j(x)=2^{jd}\varphi(2^jx)$ such that $\|\varphi_j\|_{L^1}=\|\varphi\|_{L^1}$. Moreover, $\widehat{\varphi_j}(\xi)=\widehat{\varphi}(2^{-j}\xi)$, and thus $\mathrm{supp }(\widehat{\varphi_j})\subseteq B(0,2^{j+1})$ and $\widehat{\varphi_j}\equiv 1$ on $B(0,2^j)$.
From this we construct a Littlewood-Paley decomposition of $\R^d$. Denote by $\psi_0=\widehat{\varphi}$ and $\psi_j(x)=\widehat{\varphi}(2^{j+1}x)-\widehat{\varphi}(2^{j}x)$ such that $\mathrm{supp }(\psi_j)\subseteq B(0,2^{j+1})\backslash B(0,2^{j-1})$ for each $j\in\mathbb{N}$. From this, we define the operators
\begin{equation}\label{eq:DefOp}
Sf=\sum_{j=0}^\infty \psi_j\left(\varphi_j*f\right),\qquad Tf=\sum_{j=0}^{\infty}\psi_j\left(f-\varphi_j*f\right).
\end{equation}
Each sum in \eqref{eq:DefOp} converges pointwise as there are at most three non-zero terms for each $x\in \R^d$, and $T+S=\mathrm{Id}_{L^2}$ as $\{\psi_j\}$ is a partition of unity of $\R^d$. We also need the following lemma.
\begin{lemma}[\cite{Wolff_Uncertainty}, Lem. $2.2$]\label{lem:SchurTest}
Let $\varphi_j$ and $\psi_j$ be defined as above. Define the kernels
\[
A(x,y)=\sum_{j=0}^\infty \psi_j(x)\varphi_j(x-y),\qquad B(\xi,\eta)=\sum_{j=0}^\infty \widehat{\psi_j}(\xi-\eta)\left(1-\widehat{\varphi_j}(\eta)\right).
\]
There exist $C>0$ such that for any $\varepsilon$-thin sets $E,F$ 
\begin{multicols}{2}
\begin{enumerate}[i)]
\item $\int_{\R^d}|A(x,y)|\,dy\leq  C$ for all $x$,
\item $\int_{\R^d}|A(x,y)|\,dx\leq  C$ for all $y$,
\item $\int_{E}|A(x,y)|\,dy\leq  C\varepsilon$ for all $x$,
\item $\int_{\R^d}|B(\xi,\eta)|\,d\eta\leq  C$ for all $\xi$,
\item $\int_{\R^d}|B(\xi,\eta)|\,d\xi\leq C$ for all $\eta$,
\item $\int_{F}|B(\xi,\eta)|\,d\xi\leq C\varepsilon$ for all $\eta$.
\end{enumerate}
\end{multicols}
\end{lemma}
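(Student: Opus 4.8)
The plan is to follow the argument of \cite{Wolff_Uncertainty,Kovrizhkin1}; its structure can be organized around three ingredients: rapid decay of the Schwartz bumps $\varphi_j$, the finite overlap and telescoping of the Littlewood--Paley pieces $\psi_j$, and a multiscale reformulation of $\varepsilon$-thinness. Concretely, since $\varphi$ is Schwartz, for every $N$ there is $C_N$ with $|\varphi_j(z)|\leq C_N 2^{jd}(1+2^j|z|)^{-N}$, while $\|\varphi_j\|_{L^1}=\|\varphi\|_{L^1}$; the $\psi_j$ satisfy $|\psi_j|\leq 1$, their supports (dyadic annuli of radius $\sim 2^j$, with $B(0,2)$ for $j=0$) overlap with bounded multiplicity, and they telescope as $\sum_{k=0}^{j}\psi_k=\widehat{\varphi_j}$, whence $\sum_{k=0}^{j}\widehat{\psi_k}=\varphi_j$ because $\varphi$ is radial. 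Finally, covering $B(x,r)$ by a bounded-overlap family of balls $B(z_i,\kappa\rho(z_i))$ with $\kappa$ a small fixed constant — which is possible because $\rho$ is $1$-Lipschitz — promotes $\varepsilon$-thinness to the all-scales estimate
\[
|E\cap B(x,r)|\leq C_d\,\varepsilon\,\max(r,\rho(x))^{d}\qquad\text{for all }r>0,
\]
and likewise for $F$.

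\emph{Estimates for $A$.} Fix $x$ and set $J_x=\{j:\psi_j(x)\neq 0\}$, a set of bounded size on which $2^{-j}$ is comparable to $\rho(x)$; since $|\psi_j|\le1$, $\int_\Omega|A(x,y)|\,dy\leq\sum_{j\in J_x}\int_\Omega|\varphi_j(x-y)|\,dy$ for any $\Omega$. With $\Omega=\R^d$ each inner integral equals $\|\varphi\|_{L^1}$, giving i). With $\Omega=E$, split $E$ into the core $\{|x-y|\leq 2^{-j}\}$, where $|\varphi_j|\leq 2^{jd}\|\varphi\|_\infty$ and $|E\cap\mathrm{core}|\leq C\varepsilon\,2^{-jd}$, and the shells $\{2^{k-j}<|x-y|\leq 2^{k+1-j}\}$, $k\geq 0$, where $|\varphi_j(x-y)|\leq C_N 2^{jd}2^{-kN}$ and $|E\cap\mathrm{shell}|\leq|E\cap B(x,2^{k+1-j})|\leq C\varepsilon\,2^{(k-j)d}$; summing the geometric series for any $N>d$ gives $\int_E|\varphi_j(x-y)|\,dy\leq C\varepsilon$ and hence iii). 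For ii), fix $y$ and write $\int_{\R^d}|A(x,y)|\,dx\leq\sum_j\int_{\mathrm{supp}\,\psi_j}|\varphi_j(x-y)|\,dx$; the boundedly many $j$ with $2^j$ comparable to $|y|$ contribute at most $\|\varphi\|_{L^1}$ each, while for the remaining $j$ one has $|x-y|\geq c\max(2^j,|y|)$ on $\mathrm{supp}\,\psi_j$, so by the decay of $\varphi$ and $|\mathrm{supp}\,\psi_j|\leq C 2^{jd}$ the contribution is at most $C_N 2^{2jd}\big(2^j\max(2^j,|y|)\big)^{-N}$, which is summable in $j$ once $N>2d$.

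\emph{Estimates for $B$.} Here the triangle inequality on the defining series is too crude — each $\widehat{\psi_j}$ has $L^1$-norm of order one and the number of relevant indices is unbounded, producing a logarithmic divergence — so one resums first. With $c_j(\eta)=1-\widehat{\varphi_j}(\eta)\in[0,1]$, which vanishes once $2^j>2|\eta|$ and satisfies $c_j-c_{j+1}=\widehat{\varphi_{j+1}}(\eta)-\widehat{\varphi_j}(\eta)=\psi_{j+1}(\eta)$, summation by parts together with $\sum_{k=0}^{j}\widehat{\psi_k}=\varphi_j$ and the vanishing boundary term yields
\[
B(\xi,\eta)=\sum_{j\geq 0}\psi_{j+1}(\eta)\,\varphi_j(\xi-\eta).
\]
In this form $B$ has the structure of $A$, with $\psi_{j+1}(\eta)$ in the role of $\psi_j(x)$: for fixed $\eta$ only boundedly many terms survive, those with $2^j$ comparable to $|\eta|$, where $2^{-j}$ is comparable to $\rho(\eta)$. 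Thus v) follows from $\int_{\R^d}|\varphi_j(\xi-\eta)|\,d\xi=\|\varphi\|_{L^1}$ and the bounded overlap of the $\psi_{j+1}$; vi) follows from the shell decomposition around $\xi=\eta$ exactly as in iii), now using that $F$ is $\varepsilon$-thin; and iv) follows by bounding $\int_{\R^d}|B(\xi,\eta)|\,d\eta\leq\sum_j\int_{\mathrm{supp}\,\psi_{j+1}}|\varphi_j(\xi-\eta)|\,d\eta$ and applying the same dichotomy as for ii).

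\emph{Main difficulty.} The only step that is not routine bookkeeping is the resummation of $B$: one has to recognize that summing absolute values loses a logarithm and that the remedy is the telescoping identity $\sum_{k\leq j}\widehat{\psi_k}=\varphi_j$ combined with $c_j(\eta)$ being eventually zero, which converts the series for $B$ into one of the same shape as $A$. A secondary point worth isolating is the all-scales bound $|E\cap B(x,r)|\leq C_d\varepsilon\max(r,\rho(x))^d$: it is what allows the dyadic-shell tails in iii) and vi) to retain the factor $\varepsilon$ rather than only a constant; everything else reduces to estimates with Schwartz tails and the finite overlap of the Littlewood--Paley decomposition.
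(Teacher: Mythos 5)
The paper does not prove Lemma~\ref{lem:SchurTest}; it is quoted from Shubin--Vakilian--Wolff \cite{Wolff_Uncertainty} (the text before the lemma says that only the construction of the operators $S,T$ is reproduced "for completeness"). So there is no "paper's proof" to compare against, but your reconstruction is, as far as I can tell, a correct account of the Shubin--Vakilian--Wolff argument and it does not contain gaps. The one point worth flagging is notational: as printed, the paper's definition $\psi_j(x)=\widehat{\varphi}(2^{j+1}x)-\widehat{\varphi}(2^{j}x)$ is inconsistent with the stated support property $\mathrm{supp}\,\psi_j\subseteq B(0,2^{j+1})\backslash B(0,2^{j-1})$; the intended definition is $\psi_j=\widehat{\varphi_j}-\widehat{\varphi_{j-1}}$, and you have implicitly worked with the corrected version throughout (it is what makes $c_j-c_{j+1}=\psi_{j+1}$ and $\sum_{k\le j}\psi_k=\widehat{\varphi_j}$ hold). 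You correctly isolate the two non-routine ingredients: the all-scales reformulation $|E\cap B(x,r)|\le C_d\varepsilon\max(r,\rho(x))^d$, which is what keeps the $\varepsilon$ factor alive in the dyadic-shell tails of iii) and vi), and the Abel resummation of $B$ using $\sum_{k\le j}\widehat{\psi_k}=\varphi_j$ (valid because $\varphi$ is radial) together with the eventual vanishing of $c_j(\eta)$, which removes the logarithmic loss that a naive term-by-term bound would incur and puts $B$ into the same form as $A$. The dichotomy used for ii) and iv) — comparable scales treated by $\|\varphi_j\|_{L^1}=\|\varphi\|_{L^1}$, separated scales by Schwartz decay against $|\mathrm{supp}\,\psi_j|\lesssim 2^{jd}$ — is sound, with the requirement $N>2d$ correctly identified. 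In short: correct, and essentially the same route as the cited source.
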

The functions $A$ and $B$ in Lemma \ref{lem:SchurTest} are defined such that
\[
Sf(x)=\int_{\R^d} A(x,y)f(y)\,dy,\qquad \widehat{Tf}(\xi)=\int_{\R^{d}}B(\xi,\eta)\widehat{f}(\eta)\,d\eta,
\] 
while the integral estimates ensure that both $S$ and $T$ are bounded operators on $L^2(\R^d)$ by Schur's test. Moreover, for each pair of $\varepsilon$-thin sets $E$ and $F$, we have the norm bounds
\begin{equation}\label{eq:epsilon_est}
\|S(\chi_Ef)\|_{L^2}\leq C\sqrt{\varepsilon}\|f\|_{L^2},\qquad \|\chi_F \widehat{Tf}\|_{L^2}\leq C\sqrt{\varepsilon}\|f\|_{L^2},
\end{equation}
for every $f\in L^2(\R^d)$. With this in mind, we present the next estimate.
\begin{prop}\label{prop:WolffOpEst}
Let $E$ and $F$ be very thin at infinity. For each $\varepsilon>0$ there exists $R_0>1$ such that
\[
\|Q_{F_\infty^R}P_{E_\infty^R}\|_{L^2\to L^2}\leq \varepsilon,
\]
for every $R>R_0$.
\end{prop}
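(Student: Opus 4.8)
\emph{Proof proposal.} The plan is to reduce the tails $E_\infty^R$ and $F_\infty^R$ to $\varepsilon'$-thin sets for a small parameter $\varepsilon'$ fixed at the end, and then to bound the operator norm directly using the identity decomposition $\mathrm{Id}_{L^2}=S+T$ together with the norm estimates \eqref{eq:epsilon_est}, rather than invoking Theorem \ref{thm:Wolff_Unc} itself (which is a lower bound, not an operator-norm statement).

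\emph{Step 1: reduction to $\varepsilon'$-thin tails.} Fix $\varepsilon'>0$. Since $E$ is very thin at infinity there is $\tilde R_E$ with $|E\cap B(x,\rho(x))|<\varepsilon'|B(x,\rho(x))|$ for $|x|>\tilde R_E$, and similarly $\tilde R_F$ for $F$. Put $R_0=\max\{\tilde R_E,\tilde R_F\}+1$. For $R>R_0$ the set $E_\infty^R$ is $\varepsilon'$-thin: if $|x|\le R-1$ then $\rho(x)\le 1$ forces $B(x,\rho(x))\subseteq B(0,R)$, so $E_\infty^R\cap B(x,\rho(x))=\emptyset$; and if $|x|>R-1\ge\tilde R_E$ then $E_\infty^R\cap B(x,\rho(x))\subseteq E\cap B(x,\rho(x))$ has measure $<\varepsilon'|B(x,\rho(x))|$. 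The same reasoning makes $F_\infty^R$ an $\varepsilon'$-thin set.

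\emph{Step 2: splitting and estimating.} Write $E'=E_\infty^R$, $F'=F_\infty^R$, fix $f\in L^2(\R^d)$, and set $g=P_{E'}f=\chi_{E'}f$. By Plancherel, $\|Q_{F'}P_{E'}f\|_{L^2}=\|\chi_{F'}\widehat{g}\|_{L^2}$, and using $g=Sg+Tg$ we get
\[
\|\chi_{F'}\widehat{g}\|_{L^2}\le \|\chi_{F'}\widehat{Sg}\|_{L^2}+\|\chi_{F'}\widehat{Tg}\|_{L^2}.
\]
For the second term, \eqref{eq:epsilon_est} applied to the $\varepsilon'$-thin set $F'$ gives $\|\chi_{F'}\widehat{Tg}\|_{L^2}\le C\sqrt{\varepsilon'}\,\|g\|_{L^2}\le C\sqrt{\varepsilon'}\,\|f\|_{L^2}$. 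For the first term, dropping $\chi_{F'}$ and using Plancherel together with \eqref{eq:epsilon_est} for the $\varepsilon'$-thin set $E'$ yields $\|\chi_{F'}\widehat{Sg}\|_{L^2}\le\|Sg\|_{L^2}=\|S(\chi_{E'}f)\|_{L^2}\le C\sqrt{\varepsilon'}\,\|f\|_{L^2}$. Hence $\|Q_{F_\infty^R}P_{E_\infty^R}\|_{L^2\to L^2}\le 2C\sqrt{\varepsilon'}$ for all $R>R_0$.

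\emph{Step 3: conclusion.} Given $\varepsilon>0$, choose $\varepsilon'$ with $2C\sqrt{\varepsilon'}\le\varepsilon$ (shrinking $\varepsilon'$ further if Lemma \ref{lem:SchurTest} and \eqref{eq:epsilon_est} require it to lie below a threshold), and take the corresponding $R_0$ from Step 1; then the claimed bound holds for every $R>R_0$. I expect the only non-routine point to be the uniformity in Step 1 — guaranteeing that the tail sets are $\varepsilon'$-thin for \emph{all} $x\in\R^d$ and not merely for $|x|$ large — which is exactly what the inclusion $B(x,\rho(x))\subseteq B(0,R)$ for $|x|\le R-1$ takes care of; the rest is a direct application of the estimates inherited from the Shubin--Vakilian--Wolff construction, the key move being to apply $\mathrm{Id}=S+T$ to $P_{E'}f$ and control the two pieces separately.
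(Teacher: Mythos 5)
Your proof is correct and follows essentially the same route as the paper's: reduce the tails $E_\infty^R,F_\infty^R$ to $\varepsilon'$-thin sets, then apply the Shubin--Vakilian--Wolff decomposition $\mathrm{Id}=S+T$ to $g=\chi_{E_\infty^R}f$ and invoke the two estimates in \eqref{eq:epsilon_est}. In fact your Step 2 is a mild streamlining: the paper first derives the general inequality \eqref{eq:epsilonEstimate} via a four-term split of $\widehat{g}$ and then specializes to $g=\chi_{E_\infty^R}f$ (which kills the $S(\chi_{A^C}g)$ term), whereas you plug in this choice of $g$ from the outset and only need the two-term split $\widehat{g}=\widehat{Sg}+\widehat{Tg}$.
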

\begin{proof}
The proof is similar to that in \cite{Wolff_Uncertainty}. Since $E$ and $F$ are very thin at infinity, there exists $R_0$ such that $E_\infty^R,F_\infty^R$ are $\varepsilon$-thin in the definition of Shubin, Vakilian and Wolff for $R>R_0$. Namely, there exists $R_0>1$ such that 
\[
\sup_{|x|>R_0-1}\frac{|E \cap B(x,\rho(x))|}{|B(x,\rho(x))|}<\varepsilon, 
\]
as $E$ is very thin at infinity. Thus, for any $|x|\leq R_0-1$ it follows that
\[
\left|E_\infty^R\cap B\left(x,\rho(x)\right)\right|=0< \varepsilon \left|B\left(x,\rho(x)\right)\right|,
\]
as the intersection is empty whenever $R>R_0$, while 
\[
\left|E_\infty^R\cap B\left(x,\rho(x)\right)\right|\leq \left|E\cap B\left(x,\rho(x)\right)\right|< \varepsilon \left|B\left(x,\rho(x)\right)\right|,
\]
for $|x|>R_0-1$.

For any sets $A,B\subseteq \R^d$ and $g\in L^2(\R^d)$, it is possible to decompose the Fourier transform of $g$ as
\[
\widehat{g}=\widehat{Sg}+\widehat{Tg}=\widehat{S(\chi_Ag)}+\widehat{S(\chi_{A^C}g)}+\chi_B\widehat{Tg}+\chi_{B^C}\widehat{Tg},
\]
where $S$ and $T$ are the operators defined in \eqref{eq:DefOp}.
The last equality follows from linearity of $S$ and $\chi_A+\chi_{A^C}=1$.
If $A,B$ are $\varepsilon$-thin, then the triangle inequality and \eqref{eq:epsilon_est} yield
\[
\|\widehat{g}-\widehat{S(\chi_{A^C}g)}-\chi_{B^C}\widehat{Tg}\|_{L^2}\leq \|\widehat{S(\chi_Ag)}\|_{L^2}+\|\chi_B\widehat{Tg}\|_{L^2}\leq C\sqrt{\varepsilon}\|g\|_{L^2}.
\]
In particular, for any $g\in L^2(\R^d)$, we have
\begin{equation}\label{eq:epsilonEstimate}
\|\widehat{g}\|_{L^2(B)}=\|\widehat{g}-\chi_{B^C}\widehat{Tg}\|_{L^2(B)}\leq\|\widehat{g}-\chi_{B^C}\widehat{Tg}\|_{L^2}\leq \|S(\chi_{A^C}g)\|_{L^2}+C\sqrt{\varepsilon}\|g\|_{L^2}.
\end{equation}

To conclude, we note that for any $f\in L^2(\R^d)$ we can choose $A=E_\infty^R$, $B=F_\infty^R$ and $g=\chi_{E^{C}_{R}}f$ in \eqref{eq:epsilonEstimate}. Combined with Plancherel's theorem, this gives
\[
\left\|Q_{F_\infty^R}P_{E_\infty^R}f\right\|_{L^2}=\|\widehat{\chi_Af}\|_{L^2(B)}\leq \|S(\chi_{A^C}\chi_{A}f)\|_{L^2}+C\sqrt{\varepsilon}\|f\|_{L^2(A)}\leq C\sqrt{\varepsilon}\|f\|_{L^2}.
\]
As $f\in L^2(\R^d)$ is arbitrarily chosen, this implies that 
\[
\left\|Q_{F_\infty^R}P_{E_\infty^R}\right\|_{L^2\to L^2}=\sup_{\|f\|_{L^2}=1}\left\|Q_{F_\infty^R}P_{E_\infty^R}f\right\|_{L^2}\leq C\sqrt{\varepsilon}.
\]
\end{proof}

\subsection{Logvinenko-Sereda Uncertainty Principle}
To control the cross-terms, we utilise the Logvinenko-Sereda uncertainty principle. More specifically, we apply the Logvinenko-Sereda uncertainty principle to estimate the norm of $Q_{B(0,R)}P_{E_\infty^R}$. Let us recall a simple version of the Logvinenko-Sereda theorem.
\begin{prop}[\cite{Schlag}, Prop $10.6$]\label{prop:Logvinenko-Sereda}
Let $\alpha>0$ and suppose $A\subseteq \R^d$ satisfies
\begin{equation}\label{eq:LogSerCond}
|A\cap B(x,1)|<\alpha |B(x,1)|
\end{equation}
for all $x\in \R^d$. If $f\in L^2(\R^d)$ satisfies $\mathrm{supp }(\widehat{f})\subseteq B(0,1)$, then
\[
\|f\|_{L^2(A)}\leq \delta(\alpha)\|f\|_{L^2},
\]
where $\delta(\alpha)\to0$ as $\alpha\to 0$.
\end{prop}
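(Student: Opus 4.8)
The plan is to recover $f$ from itself by convolution against a fixed rapidly decaying kernel and then to convert the smallness of $A$ inside every unit ball into the smallness of a weighted average. Concretely, take $\psi=\varphi$ with $\varphi$ as in the Littlewood--Paley construction above, so that $\psi\in\sS(\R^d)$ and $\widehat{\psi}\equiv 1$ on $B(0,1)$. Since $\mathrm{supp}(\widehat{f})\subseteq B(0,1)$ we have $\widehat{f*\psi}=\widehat{f}\,\widehat{\psi}=\widehat{f}$, hence $f=f*\psi$. Splitting the kernel as $|\psi(x-y)|=|\psi(x-y)|^{1/2}\cdot|\psi(x-y)|^{1/2}$ inside the convolution integral and applying the Cauchy--Schwarz inequality yields the pointwise bound
\[
|f(x)|^2\le \|\psi\|_{L^1}\,\big(|f|^2*|\psi|\big)(x)\qquad\text{for a.e.\ }x\in\R^d,
\]
both sides being finite since $|f|^2\in L^1(\R^d)$ and $\psi\in L^1\cap L^\infty$.

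Integrating this inequality over $A$ and interchanging the order of integration by Tonelli's theorem gives
\[
\|f\|_{L^2(A)}^2\le \|\psi\|_{L^1}\int_{\R^d}|f(y)|^2\Big(\int_A|\psi(x-y)|\,dx\Big)dy,
\]
so it suffices to prove the uniform estimate $\sup_{y\in\R^d}\int_A|\psi(x-y)|\,dx\le C(d)\,\alpha$; the proposition then follows with $\delta(\alpha)=\big(C(d)\|\psi\|_{L^1}\,\alpha\big)^{1/2}$, which tends to $0$ as $\alpha\to0$. To establish the uniform estimate I would translate by $z=x-y$ and split $A-y$ into its intersection with $B(0,1)$ and with the integer annuli $B(0,k{+}1)\setminus B(0,k)$ for $k\ge1$; the translated set $A-y$ still satisfies \eqref{eq:LogSerCond}, so the innermost piece contributes at most $\|\psi\|_{L^\infty}\,\alpha\,|B(0,1)|$, while the $k$-th annulus can be covered by at most $C(d)\,k^{d-1}$ balls of radius $1$, on each of which $A-y$ has measure $<\alpha|B(0,1)|$, and there $|\psi(z)|\le C_N(1+|z|)^{-N}\le C_N k^{-N}$. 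Choosing $N=d+1$ makes $\sum_{k\ge1}k^{d-1}k^{-N}$ converge, and summing these contributions bounds $\int_A|\psi(x-y)|\,dx$ by $C(d)\,\alpha$ uniformly in $y$.

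The reproduction identity $f=f*\psi$ and the Cauchy--Schwarz/Tonelli manipulations are routine; the one point that requires genuine care is the uniform-in-$y$ bound on $\int_A|\psi(x-y)|\,dx$, i.e.\ checking that covering each shell by unit balls produces exactly the power $k^{d-1}$ and that the Schwartz decay of $\psi$ beats it. Since the statement asks only for \emph{some} rate $\delta(\alpha)\to0$ and not a sharp one, the crude rate $\delta(\alpha)\asymp\sqrt{\alpha}$ obtained this way is entirely sufficient, and there is no essential obstacle beyond this bookkeeping.
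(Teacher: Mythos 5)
The paper cites this proposition directly from Schlag without giving a proof, so there is no in-paper argument to compare against. Your proof---write $f=f*\psi$ for a Schwartz bump $\psi$ with $\widehat\psi\equiv 1$ on $B(0,1)$, use Cauchy--Schwarz to obtain $|f|^2\le\|\psi\|_{L^1}\,(|f|^2*|\psi|)$ pointwise, integrate over $A$, and reduce to the uniform bound $\sup_{y}\int_A|\psi(x-y)|\,dx\le C(d)\alpha$ obtained by covering each shell $B(0,k+1)\setminus B(0,k)$ with $O(k^{d-1})$ unit balls and invoking Schwartz decay $|\psi(z)|\lesssim(1+|z|)^{-N}$ with $N>d$---is the standard argument for Logvinenko--Sereda in this setting and is correct, yielding $\delta(\alpha)\asymp\sqrt{\alpha}$. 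The translation invariance of hypothesis \eqref{eq:LogSerCond}, the shell-count $k^{d-1}$, and the convergence of $\sum_k k^{d-1-N}$ for $N=d+1$ are all checked accurately; no gaps.
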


Since any function $f$ with $\mathrm{supp }(\widehat{f})\subseteq B(0,1)$ is an eigenfunction of the operator $Q_{B(0,1)}$, with eigenvalue $1$, it follows that the Logvinenko-Sereda uncertainty principle implies
\[
\|P_AQ_{B(0,1)}f\|_{L^2}\leq \delta(\alpha)\|Q_{B(0,1)}f\|_{L^2},
\]
whenever $A$ satisfies \eqref{eq:LogSerCond}. Thus, through a change of variables, it is possible to connect the operator $Q_{B(0,R)}P_{E_\infty^R}$ with an operator of the form $Q_{B(0,1)}P_{A}$ where the $R$ dependence is entirely on the set $A$ in physical space. 
\begin{lemma}
Let $A\subseteq \R^{2d}$. Then, for any $R>0$ and $x\in \R^d$
\[
Q_{B(0,R)}P_{A}f(x)=Q_{B(0,1)}P_{RA}f_{R^{-1}}\left(Rx\right),
\]
for every $f\in \sS(\R^d)$, and where $f_R(x)=f(Rx)$.
\end{lemma}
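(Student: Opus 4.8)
The plan is to prove the identity by two changes of variables: one applied to the physical-space integral defining the Fourier transform, and one to the frequency-space integral defining its inverse, arranged so that the dilation acting on the spatial variable is exactly absorbed by the dilation $A\mapsto RA$ of the cutoff set together with the two Jacobian factors $R^{\pm d}$.

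Concretely, writing $h=f_{R^{-1}}$ so that $h(w)=f(w/R)$, I would start from the integral representation
\[
Q_{B(0,1)}P_{RA}h(z)=\int_{|\eta|<1}e^{2\pi i\eta\cdot z}\,\widehat{\chi_{RA}h}(\eta)\,d\eta .
\]
First I would compute the inner transform: substituting $w=Ry$ in $\widehat{\chi_{RA}h}(\eta)=\int_{\R^d}e^{-2\pi i\eta\cdot w}\chi_{RA}(w)f(w/R)\,dw$ and using $\chi_{RA}(Ry)=\chi_A(y)$ gives the scaling relation $\widehat{\chi_{RA}h}(\eta)=R^d\,\widehat{\chi_Af}(R\eta)$. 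Second, I would substitute $\eta=\xi/R$ in the outer integral: the constraint $|\eta|<1$ becomes $|\xi|<R$, the Jacobian contributes a factor $R^{-d}$ which cancels the $R^d$ just produced, and the phase becomes $e^{2\pi i\xi\cdot(z/R)}$; hence $Q_{B(0,1)}P_{RA}h(z)=\int_{|\xi|<R}e^{2\pi i\xi\cdot(z/R)}\widehat{\chi_Af}(\xi)\,d\xi$. Evaluating at $z=Rx$ then yields $\int_{|\xi|<R}e^{2\pi i\xi\cdot x}\widehat{\chi_Af}(\xi)\,d\xi=Q_{B(0,R)}P_Af(x)$, which is the asserted identity.

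Finally I would record why each manipulation is legitimate: for $f\in\sS(\R^d)$ one has $\chi_Af\in L^1(\R^d)$, so its Fourier transform is given pointwise by the absolutely convergent integral used above and the linear change of variables $w=Ry$ is valid; truncating the $\eta$-integral to $B(0,1)$ (equivalently the $\xi$-integral to $B(0,R)$) makes that integral absolutely convergent as well, so the second substitution and the evaluation of the inverse Fourier transform as an integral need no limiting argument. I do not expect a genuine obstacle here: the entire content is the bookkeeping of the factors $R^{\pm d}$ and the observation that dilating the cutoff set by $R$ is precisely what compensates dilating the spatial variable by $R$.
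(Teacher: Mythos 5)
Your proposal is correct and essentially matches the paper's proof: both perform the same two changes of variables ($\xi=R\eta$ in frequency, $y=R^{-1}t$ in space), with the Jacobian factors $R^{\pm d}$ cancelling. The only cosmetic differences are the direction of the computation (you start from the right-hand side, the paper from the left) and that you split the double integral into the forward and inverse Fourier transform steps, whereas the paper manipulates the iterated integral $\iint$ directly.
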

\begin{proof}
The proof follows by a change of variables in the definition of the operator $Q_{B(0,R)}P_A$. For any $f\in\sS(\R^d)$, we have
\begin{align*}
Q_{B(0,R)}P_{A}f(x)
=&\,\iint_{\R^{2d}}e^{-2\pi i \xi\cdot(y-x)}\chi_{B(0,R)}(\xi)\chi_{A}(y)f(y)\,dy\,d\xi\\
=&\, R^d\iint_{\R^{2d}}e^{-2\pi i R\eta\cdot(y-x)}\chi_{B(0,1)}(\eta)\chi_{A}(y)f(y)\,dy\,d\eta\\
=&\, \iint_{\R^{2d}}e^{-2\pi i \eta\cdot(t-Rx)}\chi_{B(0,1)}\chi_{RA}(t)f(R^{-1}t)\,dt\,d\eta\\
=&Q_{B(0,1)}P_{RA}f_{R^{-1}}(Rx).
\end{align*}
\end{proof}

\begin{corollary}\label{cor:BndChangeVar}
Let $A\subseteq\R^{d}$. Then, for any $R>0$,
\[
\|Q_{B(0,R)}P_{A}\|_{L^2\to L^2}=\|Q_{B(0,1)}P_{RA}\|_{L^2\to L^2}.
\]
\end{corollary}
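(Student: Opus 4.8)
The plan is to deduce the corollary directly from the preceding lemma by recognizing the pointwise identity it furnishes as a statement about conjugation by a unitary dilation operator. To this end I would introduce the dilation operator $D_R\colon L^2(\R^d)\to L^2(\R^d)$ defined by $D_Rf(x)=R^{d/2}f(Rx)$. A change of variables shows that $D_R$ is unitary with inverse $D_R^{-1}=D_{R^{-1}}$, and in particular conjugation by $D_R$ preserves the operator norm: $\|D_RTD_R^{-1}\|_{L^2\to L^2}=\|T\|_{L^2\to L^2}$ for every bounded operator $T$ on $L^2(\R^d)$.

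Next I would rewrite the lemma in this language. In its notation, $f_{R^{-1}}(x)=f(R^{-1}x)=R^{d/2}D_R^{-1}f(x)$, while the operation $h\mapsto h(R\cdot)$ equals $R^{-d/2}D_R$. Substituting these two relations into the identity $Q_{B(0,R)}P_Af(x)=\big(Q_{B(0,1)}P_{RA}f_{R^{-1}}\big)(Rx)$ from the lemma, the powers of $R^{d/2}$ cancel and one obtains the operator identity
\[
Q_{B(0,R)}P_A=D_R\,Q_{B(0,1)}P_{RA}\,D_R^{-1}
\]
on $\sS(\R^d)$, and hence on all of $L^2(\R^d)$ since $\sS(\R^d)$ is dense and every operator appearing is bounded. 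Applying the norm invariance under unitary conjugation then yields $\|Q_{B(0,R)}P_A\|_{L^2\to L^2}=\|Q_{B(0,1)}P_{RA}\|_{L^2\to L^2}$, which is the claim.

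Alternatively, without introducing $D_R$, one can argue by direct estimation: the lemma together with the scaling relations $\|h(R\cdot)\|_{L^2}=R^{-d/2}\|h\|_{L^2}$ and $\|f(R^{-1}\cdot)\|_{L^2}=R^{d/2}\|f\|_{L^2}$ gives $\|Q_{B(0,R)}P_Af\|_{L^2}\leq\|Q_{B(0,1)}P_{RA}\|_{L^2\to L^2}\|f\|_{L^2}$, and the reverse inequality follows by applying this with $R$ replaced by $R^{-1}$ and $A$ replaced by $RA$, using $R^{-1}(RA)=A$. There is essentially no obstacle here beyond carefully bookkeeping the $L^2$-normalization of the dilations so that the constants cancel; the only mild subtlety is passing from the pointwise identity of the lemma, stated for $f\in\sS(\R^d)$, to an operator identity on $L^2(\R^d)$, which is immediate from density and boundedness.
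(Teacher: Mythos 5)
Your argument is correct. The paper's own proof takes your ``alternative'' route: it squares the $L^2$ norm, applies the lemma's pointwise identity inside the integral, performs the change of variables to pull out the factor $R^{-d}$, and then observes that $\|f_{R^{-1}}\|_{L^2}^2 = R^d\|f\|_{L^2}^2$ so the powers of $R$ cancel, giving one inequality; the reverse inequality is obtained by the symmetric substitution. Your primary route --- packaging the dilations into the unitary $D_R f(x) = R^{d/2}f(Rx)$ and observing that the lemma's identity is exactly $Q_{B(0,R)}P_A = D_R\,Q_{B(0,1)}P_{RA}\,D_R^{-1}$ --- is a genuine, and arguably cleaner, reformulation: it makes the equality of norms an instance of invariance under unitary conjugation rather than something to verify by two hand-computed inequalities, and it isolates the only bookkeeping that matters (the $R^{\pm d/2}$ normalization) in a single place. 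Both arguments are sound; the unitary conjugation framing makes the ``replacing $f$ with $f_R$'' step of the paper unnecessary, while the paper's version avoids introducing any auxiliary operator. You are also right to flag that the lemma is stated on $\sS(\R^d)$ and the passage to $L^2$ uses density plus boundedness of all operators involved --- the paper glosses over this but it is the same implicit step.
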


\begin{proof}
For any $f\in L^2(\R^d)$ it follows that
\begin{align*}
\|Q_{B(0,R)}P_{A}f\|_{L^2}^2
=&\,\int_{\R^d}|Q_{B(0,1)}P_{RA}f_{R^{-1}}(Rx)|^2\,dx\\
=&\,R^{-d}\|Q_{B(0,1)}P_{RA}f_{R^{-1}}\|_{L^2}\\
\leq&\, R^{-d}\|Q_{B(0,1)}P_{RA}\|_{L^2\to L^2}^2\|f_{R^{-1}}\|_{L^2}^2\\
=&\|Q_{B(0,1)}P_{RA}\|_{L^2\to L^2}^2\|f\|_{L^2}^2,
\end{align*}
and so $\|Q_{B(0,R)}P_{A}\|_{L^2\to L^2}\leq\|Q_{B(0,1)}P_{RA}\|_{L^2\to L^2}$. Replacing $f$ with $f_R$ and $x$ with $R^{-1}x$ gives $\|Q_{B(0,1)}P_{RA}\|_{L^2\to L^2}\leq\|Q_{B(0,R)}P_{A}\|_{L^2\to L^2}$ by a similar calculation.
\end{proof}

The next lemma is what connects the operators $Q_{B(0,R)}P_{E_\infty^R}$ with the Logvinenko-Sereda uncertainty principle. It shows that for sufficiently large $R$, the set $RE_\infty^R$ satisfies the condition of the Logvinenko-Sereda principle.
\begin{lemma}\label{lem:BallEstimate}
Let $E\subseteq \R^d$ be very thin at infinity. For every $\varepsilon>0$ there exists $R_0>0$ and a universal constant $C_d>0$ such that for all 
\[
|RE_\infty^R\cap B(x,1)|\leq C_d\varepsilon,
\]
for all $R>R_0$ and $x\in\R^d$.
\end{lemma}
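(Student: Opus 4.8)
The plan is to transfer the estimate back to a ball of fixed radius in the undilated variable, and then to cover that ball by boundedly many balls of the form $B(z,\rho(z))$ to which the very-thin-at-infinity hypothesis applies; the whole content of the lemma is that a certain count-versus-volume trade-off produces exactly the factor $R^{-d}$ needed to cancel the dilation Jacobian.

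First I would change variables. For any $x\in\R^d$ the set $RE_\infty^R\cap B(x,1)$ is the $R$-dilate of $E_\infty^R\cap B(x/R,1/R)$, so
\[
|RE_\infty^R\cap B(x,1)| = R^d\,|E_\infty^R\cap B(y,1/R)| \le R^d\,|E\cap B(y,1/R)|,\qquad y:=x/R .
\]
Moreover $E_\infty^R\cap B(y,1/R)=\emptyset$ unless $B(y,1/R)$ meets the complement of $B(0,R)$, i.e.\ unless $|y|>R-1/R$; so it suffices to bound $R^d|E\cap B(y,1/R)|$ by $C_d\varepsilon$ for such $y$, which in particular have large norm once $R$ is large.

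Next, using \eqref{eq:defVTAI}, fix $\tilde R$ so that $|E\cap B(z,\rho(z))|<\varepsilon\,|B(0,1)|\,\rho(z)^d$ whenever $|z|>\tilde R$, and take $R_0$ larger than $\tilde R$ and larger than some absolute constant (so that the elementary bounds below hold). For $R>R_0$ and $y$ as above, cover $B(y,1/R)$ by balls centred in it of the form $B(z_i,\rho(z_i))$: put $s:=1/(|y|+1/R)$, which is a lower bound for $\rho(z)=1/|z|$ throughout $B(y,1/R)$ since $|z|\le |y|+1/R$ there, let $\{z_i\}$ be a maximal $s$-separated subset of $B(y,1/R)$, so that by maximality $B(y,1/R)\subseteq\bigcup_i B(z_i,s)\subseteq\bigcup_i B(z_i,\rho(z_i))$, and bound the number $N$ of centres by the usual volume-packing estimate $N\le\big(1+2/(Rs)\big)^d=\big(1+2(|y|+1/R)/R\big)^d$. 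Since each $z_i\in B(y,1/R)$ and $|y|>R-1/R>R_0-1>\tilde R$, every $|z_i|>\tilde R$, so the displayed bound applies to each $B(z_i,\rho(z_i))$, and $\rho(z_i)=1/|z_i|\le 1/(|y|-1/R)$ for all $i$; summing gives
\[
|E\cap B(y,1/R)| \le \sum_i |E\cap B(z_i,\rho(z_i))| < \varepsilon\,|B(0,1)|\,N\,(|y|-1/R)^{-d}.
\]

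It remains to check that $N\,(|y|-1/R)^{-d}\le C_d R^{-d}$ with $C_d$ depending only on $d$, which then yields $R^d|E\cap B(y,1/R)|\le C_d\varepsilon$ after absorbing $|B(0,1)|$ into $C_d$. Here $N$ is comparable to $(|y|/R)^d$ while $(|y|-1/R)^{-d}$ is comparable to $|y|^{-d}$, so their product is comparable to $R^{-d}$; one verifies this by treating the ranges $R-1/R<|y|\le R$ and $|y|>R$ separately and using $R>R_0$. This trade-off is the only place where care is needed: individually neither the number of covering balls nor the volume of a single ball stays bounded as $|y|\to\infty$, but their product is always $\lesssim_d R^{-d}$, which is precisely the factor that absorbs the Jacobian $R^d$ from the dilation and leaves a bound depending only on $d$ and $\varepsilon$.
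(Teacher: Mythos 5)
Your proof is correct and follows essentially the same route as the paper: rescale by $R$, reduce to $|y|=|x|/R$ large, cover $B(y,1/R)$ by balls $B(z_i,\rho(z_i))$ to which the very-thin-at-infinity hypothesis applies, and observe that the covering number times the volume of a single ball produces the $R^{-d}$ needed to cancel the Jacobian. The only difference is organizational — you use a single covering argument with radius $s=1/(|y|+1/R)$ for all admissible $y$, whereas the paper splits into the boundary case $R^2-1\le|x|\le R^2$ (one ball suffices) and the main case $|x|>R^2$ — but the underlying estimate and its proof are the same.
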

\begin{proof}
There exists $R_0>0$ such that
\begin{equation}\label{eq:EpsThinIneq}
|E_\infty^R\cap B(x,\rho(x))|<\varepsilon |B(x,\rho(x))|,
\end{equation}
for all $R>R_0$ since $E$ is very thin at infinity. 

We may assume that $|x|\geq R^2-1$ since $RE_\infty^R\cap B(x,1)$ is empty otherwise. The measure of the intersection can be written as
\begin{align}\label{eq:ChangOfVari}
|RE_\infty^R\cap B(x,1)|
=&R^d\left|E_\infty^R\cap B\left(\frac{x}{R},\frac{1}{R}\right)\right|,
\end{align}
through a change of variables.

For now, assume $|x|>R^2$. It then follows that
\[
\frac{1}{R}=\frac{R}{R^2}> \frac{R}{|x|}>\frac{R}{|x|+1}>\frac{R}{2|x|}.
\]
We can then find a finite collection $\mathcal{X}=\{x_k\}\subseteq B(R^{-1}x,R^{-1})$ such that
\[
B\left(\frac{x}{R},\frac{1}{R}\right)\subseteq \bigcup_{x_k\in \mathcal{X}} B\left(x_k,\frac{R}{2|x|}\right),
\]
and $\mathrm{Card}(\mathcal{X})\leq C_d (|x|/R^2)^d$ as we cover a ball of radius $R^{-1}$ by balls of radius $R/(2|x|)$. The radius $R/(2|x|)$ is chosen so that
\[
\frac{R}{2|x|}\leq \rho(x_k)\leq 2\frac{R}{|x|},
\]
for every $x_k\in \mathcal{X}$. Here we used that $|x|>R^2>1$ and $R^{-1}(|x|-1)<|x_k|<R^{-1}(|x|+1)$. Thus, by monotonicity and subadditivity of the Lebesgue measure, it follows from \eqref{eq:EpsThinIneq} that
\begin{align*}
|RE_\infty^R\cap B(x,1)|
\leq&\, R^d\sum_{x_k\in \mathcal{X}}|E_\infty^R\cap B(x_k,\rho(x_k)|\\
\leq &\,R^d\sum_{x_k\in\mathcal{X}}\varepsilon|B(x_k,\rho(x_k)|\\
\leq&\, \varepsilon R^d\mathrm{Card}(\mathcal{X})\left|B\left(0,2\frac{R}{|x|}\right)\right|\\
\leq &\, 2^dC_d \varepsilon.
\end{align*}

If $R^2-1\leq |x|\leq R^2$, then $R^{-1}\leq R/|x|=\rho(R^{-1}x)$, and by monotonicity and \eqref{eq:EpsThinIneq} we have
\begin{align*}
|RE_\infty^R\cap B(x,1)|
=&\,R^{d}\left|E_\infty^R\cap B\left(\frac{x}{R},\frac{1}{R}\right)\right|\\
\leq&\, R^d\left|E_\infty^R\cap B\left(\frac{x}{R},\rho\left(\frac{x}{R}\right)\right)\right|\\
\leq&\, \omega_d R^{2d}|x|^{-d}\varepsilon\\
\leq&\, C_d \varepsilon,
\end{align*}
where we used that $|x|^{-1}\leq (R^2-1)^{-1}\leq 2R^{-2}$ and $\omega_d=|B(0,1)|$. 
\end{proof}
\begin{corollary}\label{cor:LogSerUnc}
For any $\varepsilon>0$ there exists $R_0>0$ such that
\[
\|Q_{B(0,R)}P_{E_\infty^R}\|_{L^2\to L^2}\leq \varepsilon,
\]
for all $R>R_0$.
\end{corollary}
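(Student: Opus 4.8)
The plan is to chain together the three ingredients assembled above: the change-of-variables identity of Corollary \ref{cor:BndChangeVar}, the geometric estimate of Lemma \ref{lem:BallEstimate}, and the Logvinenko-Sereda principle of Proposition \ref{prop:Logvinenko-Sereda}. First I would invoke Corollary \ref{cor:BndChangeVar} with $A=E_\infty^R$ to reduce the problem to estimating $\|Q_{B(0,1)}P_{RE_\infty^R}\|_{L^2\to L^2}$, so that the entire $R$-dependence sits on the physical-space set $RE_\infty^R$ while the frequency cutoff is fixed at the unit ball — exactly the shape needed to apply Proposition \ref{prop:Logvinenko-Sereda}.

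Next, given $\varepsilon>0$, I would first choose $\alpha>0$ small enough that $\delta(\alpha)\leq\varepsilon$, which is legitimate because $\delta(\alpha)\to 0$ as $\alpha\to 0$. Then I apply Lemma \ref{lem:BallEstimate} with the parameter $\alpha\,|B(0,1)|/(2C_d)$ in place of its ``$\varepsilon$'', where $C_d$ is the universal constant furnished by that lemma. This produces an $R_0>0$ such that for every $R>R_0$ and every $x\in\R^d$,
\[
|RE_\infty^R\cap B(x,1)|\leq C_d\cdot\frac{\alpha\,|B(0,1)|}{2C_d}=\frac{\alpha}{2}\,|B(0,1)|<\alpha\,|B(x,1)|,
\]
so that $A=RE_\infty^R$ satisfies the hypothesis \eqref{eq:LogSerCond} of Proposition \ref{prop:Logvinenko-Sereda} with constant $\alpha$, for all $R>R_0$.

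Finally I would transfer this to the operator norm. Using the identity $(Q_FP_E)^*=P_EQ_F$ from the introduction, $\|Q_{B(0,1)}P_{RE_\infty^R}\|_{L^2\to L^2}=\|P_{RE_\infty^R}Q_{B(0,1)}\|_{L^2\to L^2}$. For any $g\in L^2(\R^d)$ the function $h=Q_{B(0,1)}g$ is band-limited to $B(0,1)$ with $\|h\|_{L^2}\leq\|g\|_{L^2}$, so Proposition \ref{prop:Logvinenko-Sereda} applied to $h$ and $A=RE_\infty^R$ gives
\[
\|P_{RE_\infty^R}Q_{B(0,1)}g\|_{L^2}=\|h\|_{L^2(RE_\infty^R)}\leq\delta(\alpha)\|h\|_{L^2}\leq\varepsilon\|g\|_{L^2}.
\]
Taking the supremum over unit-norm $g$ and undoing the rescaling via Corollary \ref{cor:BndChangeVar} yields $\|Q_{B(0,R)}P_{E_\infty^R}\|_{L^2\to L^2}\leq\varepsilon$ for all $R>R_0$. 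I do not expect any genuine obstacle here: the proof is purely an assembly of the preceding results, and the only points requiring mild care are the bookkeeping of the dimensional constants when feeding a parameter into Lemma \ref{lem:BallEstimate}, and the observation (already recorded in the text) that band-limited functions are fixed by $Q_{B(0,1)}$, which is precisely what upgrades the pointwise-in-$f$ Logvinenko-Sereda inequality to an operator-norm bound.
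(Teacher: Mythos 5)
Your argument is correct and follows the same route as the paper: rescale via Corollary~\ref{cor:BndChangeVar}, verify the Logvinenko--Sereda density hypothesis for $RE_\infty^R$ via Lemma~\ref{lem:BallEstimate}, apply Proposition~\ref{prop:Logvinenko-Sereda}, and pass from the band-limited inequality to the operator norm using $(Q_{B(0,1)}P_{RE_\infty^R})^* = P_{RE_\infty^R}Q_{B(0,1)}$. The only difference is that you make the constant bookkeeping (feeding $\alpha\,|B(0,1)|/(2C_d)$ into the lemma) explicit, which the paper leaves implicit.
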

\begin{proof}
By Lemma \ref{lem:BallEstimate} and Proposition \ref{prop:Logvinenko-Sereda}, it follows that for each $\alpha>0$ there exists $R_0>0$ such that for each $R>R_0$
\begin{equation}\label{eq:CorEstimate}
\|f\|_{L^2(RE_\infty^R)}\leq \delta (\alpha)\|f\|_{L^2},
\end{equation}
for every $f\in L^2(\R^{2d})$ with $\mathrm{supp }(\widehat{f})\subseteq B(0,1)$. However, inequality \eqref{eq:CorEstimate} can be written as
\[
\|P_{RE_\infty^R}Q_{B(0,1)}f\|_{L^2}\leq \delta(\alpha)\|Q_{B(0,1)}f\|_{L^2}\leq \delta(\alpha)\|f\|_{L^2},
\]
for any $f\in L^2(\R^d)$. Choosing $\alpha$ such that $\delta(\alpha)\leq \varepsilon$ combined with Corollary \ref{cor:BndChangeVar} results in
\[
\|Q_{B(0,R)}P_{E_\infty^R}\|_{L^2\to L^2}=\|Q_{B(0,1)}P_{RE_\infty^R}\|_{L^2\to L^2}=\|P_{RE_\infty^R}Q_{B(0,1)}\|_{L^2\to L^2}\leq \varepsilon,
\]
which gives the desired outcome.
\end{proof}

We are now ready for proof of Theorem \ref{thm:MainThm} as described towards the end of section \ref{sec:Intro}.
\begin{proof}[Proof of Theorem \ref{thm:MainThm}]
To prove Theorem \ref{thm:MainThm} we use the decomposition \eqref{eq:OpDecomp} and the triangle inequality to write
\[
\|Q_FP_E-Q_{F^R}P_{E^R}\|_{L^2\to L^2}\leq\|Q_{F_\infty^R}P_{E^R}\|_{L^2\to L^2}+\|Q_{F^R}P_{E_\infty^R}\|_{L^2\to L^2}+\|Q_{F_\infty^R}P_{E_\infty^R}\|_{L^2\to L^2},
\]
for any $R>0$. For each $\varepsilon>0$, there exists $R_1>1$ such that
\[
\|Q_{F_\infty^R}P_{E_\infty^R}\|_{L^2\to L^2}\leq \frac{\varepsilon}{3},
\]
for every $R>R_1$ by Proposition \ref{prop:WolffOpEst}. From Corollary \ref{cor:LogSerUnc} there is some $R_2>0$ such that
\[
\|Q_{F^R}P_{E_\infty^R}\|_{L^2\to L^2}\leq \|Q_{B(0,R)}P_{E_\infty^R}\|_{L^2\to L^2}\leq \frac{\varepsilon}{3},
\]
for all $R>R_2$. Plancherel's theorem implies that
\[
\|Q_{F_\infty^R}P_{E^R}f\|_{L^2}=\|\widehat{Q_{F_\infty^R}P_{E^R}f}\|_{L^2}=\|P_{F_\infty^R}Q_{E^R}\widehat{f}\|_{L^2}.
\]
Since the operators $Q_F$ and $P_E$ are self-adjoint, it follows that
\[
\|Q_{F_\infty^R}P_{E^R}\|_{L^2\to L^2}=\|P_{F_\infty^R}Q_{E^R}\|_{L^2\to L^2}=\|Q_{E^R}P_{F_\infty^R}\|_{L^2\to L^2}< \frac{\varepsilon}{3},
\]
for each $R>R_3$ by Corollary \ref{cor:LogSerUnc}. Combining these three estimates, we can conclude that for any $\varepsilon>0$,
\[
\|Q_FP_E-Q_{F^R}P_{E^R}\|_{L^2\to L^2}\leq \varepsilon,
\]
for every $R>R_0=\max\{R_1,R_2,R_3\}$. This shows that $Q_FP_E$ can be approximated by compact operators, and is therefore a compact operator itself.
\end{proof}

\section*{Acknowledgements}
This work was conducted during a research stay at Stanford University in the Autumn of 2024, and the research stay was funded through the NSF grant DMS-2247185. The author is grateful for the wonderful hospitality of Stanford University during the stay, and would like to specially thank Professor Eugenia Malinnikova and Professor Franz Luef for their insightful discussions which made this paper possible. The author also appreciate the helpful comments given by Associate Professor Sigrid Grepstad on earlier versions of this manuscript.

\printbibliography

\end{document}